\theoremstyle{plain}
\newtheorem{theorem}{Theorem}[section]
\newtheorem{corollary}[theorem]{Corollary}
\theoremstyle{definition}
\newtheorem{remark}[theorem]{Remark}
\newtheorem{example}[theorem]{Example}
\newcommand{\MM}{\mathcal M}
\newcommand{\BM}{\overline{\mathcal M}}
\newcommand{\PP}{\mathcal P}
\newcommand{\CC}{\mathcal C}
\newcommand{\calC}{\mathcal C}
\newcommand{\OO}{\mathcal O}
\newcommand{\RR}{\mathbb R}
\newcommand{\BHH}{\overline{\mathcal H}}
\newcommand{\BPP}{\overline{\mathcal P}}
\newcommand{\HH}{\mathcal H}
\newcommand{\mult}{\operatorname{mult}}
\newcommand{\bbP}{\mathbb P}
\newcommand{\bbZ}{\mathbb Z}
\newcommand{\SL}{\operatorname{SL}}
\newcommand{\Res}{\operatorname{Res}}
\begin{document}

\makeatletter
	\@namedef{subjclassname@2010}{%
	\textup{2010} Mathematics Subject Classification}
	\makeatother

\title{Positivity of divisor classes on the strata of differentials}

\author{Dawei Chen}
\address{Department of Mathematics, Boston College, Chestnut Hill, MA 02467}
\email{dawei.chen@bc.edu}

\subjclass[2010]{14H10, 14H15, 14E30}
\keywords{strata of differentials, moduli space of curves, positivity of divisor classes}

\date{\today}

\thanks{The author is partially supported by NSF CAREER Award DMS-1350396.}

\begin{abstract}
Three decades ago Cornalba-Harris proved a fundamental positivity result for divisor classes associated to families of stable curves. 
In this paper we establish an analogous positivity result for divisor classes associated to families of stable differentials. 
\end{abstract}

\maketitle

%\setcounter{tocdepth}{1}
%\tableofcontents

%%%%%%%%%%%%%%%%%
\section{introduction}
\label{sec:intro}
%%%%%%%%%%%%%%%%%

For a positive integer $k$, let $\mu = (m_1, \ldots, m_n)$ be an integral (ordered) partition of $k(2g-2)$, i.e., $m_i \in \bbZ$ and $\sum_{i=1}^n m_i = k(2g-2)$. The stratum of $k$-differentials $\HH^k(\mu)$ parameterizes $(C, \xi, p_1, \ldots, p_n)$, where $C$ is a smooth and connected complex algebraic curve of genus $g$ and $\xi$ is a (possibly meromorphic) section of $K_C^{\otimes k}$ such that $(\xi)_0 - (\xi)_\infty = \sum_{i=1}^n m_i p_i$ for distinct (ordered) points $p_1, \ldots, p_n \in C$. If we consider differentials up to scale, then the corresponding stratum of $k$-canonical divisors $\PP^k(\mu)$ parameterizes the underlying divisors $ \sum_{i=1}^n m_i p_i$, and $\PP^k(\mu)$ is thus the projectivization of $\HH^k(\mu)$. We remark that 
for special $\mu$ the strata $\HH^k(\mu)$ and $\PP^k(\mu)$ can be disconnected with extra hyperelliptic, spin or nonprimitive connected components \cite{KontsevichZorich, LanneauQuad, Boissy}. 
%We also write $\HH(\mu)$ and $\PP(\mu)$ for the case $k=1$. 

Abelian and quadratic differentials, i.e., the cases $k=1$ and $k=2$ respectively, have broad connections to flat geometry, billiard dynamics, and Teichm\"uller theory. Recently their algebraic properties (also for general $k$-differentials) have been investigated significantly, which has produced remarkable results in the study of Teichm\"uller dynamics and moduli of curves. We refer to \cite{ZorichFlat, WrightLectures, Bootcamp, EskinMirzakhani, EskinMirzakhaniMohammadi, Filip, FarkasPandharipande, BCGGM1, BCGGM2} for related topics as well as some recent developments. 

Let $\MM_{g,n}$ be the moduli space of smooth genus $g$ curves with $n$ distinct (ordered) marked points. For a signature $\mu = (m_1, \ldots, m_n)$ with $m_1, \ldots, m_r \geq 0$ and $m_{r+1}, \ldots, m_n < 0$, define the (twisted) $k$th Hodge bundle 
$\HH^k(\mu^{-})$ over $\MM_{g,n}$ (twisted by the negative part $\mu^{-} = (m_{r+1}, \ldots, m_n)$ of $\mu$) as 
$\pi_{*} (\omega^{\otimes k} (-m_{r+1}S_{r+1} - \cdots - m_n S_n))$, where $\pi: \CC \to \MM_{g,n}$ is the universal curve, $\omega$ is the relative dualizing line bundle and $S_i$ is the section of $\pi$ corresponding to the marked point $p_i$. Then $\HH^k(\mu^{-})$ contains $\HH^k(\mu)$ as a subvariety. The (twisted) $k$th Hodge bundle $\HH^k(\mu^{-})$ extends and remains as a vector bundle $\BHH^k(\mu^{-})$ over the Deligne-Mumford compactification $\BM_{g,n}$ of pointed stable curves. Denote by $\BPP^k(\mu^{-})$ the projectivization of $\BHH^k(\mu^{-})$  over $\BM_{g,n}$ and let $\BPP^k(\mu)$ be the closure of $\PP^k(\mu)$ in $\BPP^k(\mu^{-})$. The compactified stratum $\BPP^k(\mu)$ is called the incidence variety compactification of $\PP^k(\mu)$ which parameterizes pointed stable differentials (up to scale) of type $\mu$ \cite{Gendron}. The incidence variety compactification $\BPP^k(\mu)$ carries richer information than the Deligne-Mumford strata compactification. The boundary points of $\BPP^k(\mu)$ were completely characterized in \cite{BCGGM1, BCGGM2}. Denote by $\eta$ the $\OO(-1)$ line bundle class of the projective bundle $\BPP^k(\mu^{-})$ as well as its restriction to each compactified stratum $\BPP^k(\mu)\subset \BPP^k(\mu^{-})$. We remark that $\eta$ plays a significant role in the tautological ring of the strata 
\cite{Tauto}. 

Recall that the rational Picard group of $\BM_{g,n}$ is generated by $\kappa$ (or $\lambda$), $\psi_1, \ldots, \psi_n$ and boundary divisor classes, where $\kappa = \pi_{*}c_1^2(\omega)$ is the first Miller-Morita-Mumford class, $\lambda$ is the first Chern class of the (ordinary) Hodge bundle, and $\psi_i$ is the cotangent line bundle class associated to the $i$th marked point. Denote by $\psi = \psi_1 + \cdots + \psi_n$. We remark that our definition of the $\kappa$ class differs from the one defined in \cite{ArbarelloCornalba}, where $\kappa^{{\rm AC}}  = \kappa + \psi$. We still use $\kappa$, $\lambda$ and $\psi_i$ to denote the pullbacks of these classes to $\BPP^k(\mu)$. We also denote by $\delta$ the total boundary divisor class of $\BM_{g,n}$ and its pullback to $\BPP^k(\mu)$. The relation $\kappa = 12\lambda - \delta$ holds on $\BM_{g,n}$, and hence it also holds on $\BPP^k(\mu)$. 
 
For a compactified moduli space, understanding positivity of divisor classes can provide crucial information for its birational geometry. For the moduli space $\BM_g$ of stable genus $g$ curves, Cornalba-Harris \cite{CornalbaHarris} showed that certain linear combinations of $\kappa$ (or $\lambda$) and $\delta$ are nef. They also obtained a similar result (and independently by Xiao \cite{Xiao}) for families of stable curves not contained in the boundary of $\BM_g$ (see Section~\ref{subsec:known} for a precise statement 
and related results). In this paper we establish an analogous positivity result for divisor classes associated to families of pointed stable differentials. 

\begin{theorem}
\label{thm:nef}
Let $\pi: \calC \to B$ be a family of pointed stable $k$-differentials $(C, \xi)$ (up to scale) of type $\mu = (m_1, \ldots, m_n)$ over an integral and complete curve. Then 
\begin{eqnarray}
\label{eq:arbitrary}
\deg_{\pi} k (\kappa + \psi) - \eta \geq 0, 
\end{eqnarray}
i.e., the divisor class $ k (\kappa + \psi) - \eta$ is nef on $\BPP^k(\mu)$. 

Moreover if $\xi$ is not identically zero on any irreducible component of the generic fiber of $\pi$, then 
\begin{eqnarray}
\label{eq:psi}
 \deg_{\pi} (m_i + k) \psi_i - \eta \geq 0 
 \end{eqnarray}
for all $i$ and
\begin{eqnarray}
\label{eq:generic} 
 \deg_{\pi} k (\kappa + \psi) - (2g-2+ n) \eta \geq 0. 
 \end{eqnarray}
\end{theorem}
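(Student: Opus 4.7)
The plan is to reduce all three inequalities to a single divisor identity on the surface $\calC$ and then carry out elementary intersection computations. Let $\LL$ denote the line bundle on $B$ pulled back from the tautological $\OO(-1)$ of $\BPP^k(\mu^-)$, so that $\deg_\pi\eta=\deg\LL$. The family datum gives an injection of line bundles $\pi^*\LL\hookrightarrow\omega_{\calC/B}^{\otimes k}(-\sum_i m_i^-S_i)$, i.e.\ a section $\xi$ whose zero divisor on $\calC$ is effective. Under the hypothesis that $\xi$ is not identically zero on any component of the generic fiber, this zero divisor equals $\sum_{m_i>0}m_iS_i + V$ with $V$ an effective vertical divisor, yielding the Picard-group identity
\[
k\,c_1(\omega_{\calC/B}) \;=\; \sum_i m_iS_i + V + \pi^*\LL \qquad(\ast)
\]
on $\calC$.

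For (\ref{eq:psi}), I would intersect $(\ast)$ with $S_i$. Using $S_i\cdot S_j=0$ for $j\ne i$ (distinct marked points stay distinct), $\omega\cdot S_i=\psi_i=-S_i^2$, and $\pi^*\LL\cdot S_i=\deg\LL$, one obtains $(m_i+k)\psi_i-\deg\LL=V\cdot S_i\geq 0$, where the last step uses that $V$ is effective and the horizontal divisor $S_i$ is not a component of the vertical $V$. For (\ref{eq:generic}), I would intersect $(\ast)$ with $\omega+\sum_j S_j$. The term $\sum_i m_iS_i$ pairs to $0$ (each $m_iS_i$ contributes $m_i\psi_i-m_i\psi_i=0$), the $\omega$-summand pushes down to $\kappa+\psi$, and $\pi^*\LL\cdot(\omega+\sum S_j)=(2g-2+n)\deg\LL$. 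This produces
\[
k(\kappa+\psi)-(2g-2+n)\deg\LL \;=\; V\cdot(\omega+\textstyle\sum_j S_j),
\]
whose right-hand side is nonnegative componentwise: writing $V=\sum a_\alpha C_\alpha$ with $a_\alpha\geq 0$, each $\omega(\sum S_j)\cdot C_\alpha$ is the degree of the canonical of $C_\alpha$ twisted by its nodes and marked points, which is $\geq 0$ by stability of the pointed fiber.

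Inequality (\ref{eq:arbitrary}) in the hypothesis case follows from (\ref{eq:generic}) combined with the classical nefness of $\kappa+\psi$: if $\deg\LL\leq 0$ use $k(\kappa+\psi)\geq 0\geq\deg\LL$, and otherwise combine (\ref{eq:generic}) with $2g-2+n\geq 1$. The main obstacle is the general case of (\ref{eq:arbitrary}), where $\xi$ may vanish identically on some horizontal components of $\calC$ and $(\ast)$ gains an extra effective horizontal summand $H$; the cancellations that drove (\ref{eq:psi}) and (\ref{eq:generic}) can then fail because $H\cdot(\omega+\sum S_j)$ and $H\cdot S_i$ may be negative. My plan for this case is to pass to the sub-family obtained by discarding the horizontal components of $H$, stabilize it to a family of pointed stable curves (tracking the scaling line bundle through the contraction), apply the already-proved (\ref{eq:generic}) to this stabilized family, and then account for the contributions of the contracted components to recover (\ref{eq:arbitrary}) for the original family.
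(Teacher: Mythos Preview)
Your argument for \eqref{eq:psi} is identical to the paper's, and your argument for \eqref{eq:generic} is in fact cleaner: the paper reaches the same key identity $k(\kappa+\psi)-(2g-2+n)\eta = (\omega+\sum_j S_j)\cdot V$ only after squaring $(\ast)$, intersecting $(\ast)$ with $V$, and combining these with the weighted sum of the $S_i$-intersections, whereas you obtain it in one stroke by intersecting $(\ast)$ directly with the nef class $\omega+\sum_j S_j$. Both proofs then conclude via nefness of $\omega+\sum_j S_j$ on $\calC$ (the paper cites \cite{Keel}; your componentwise stability count gives the same thing, though note that after semistable reduction the degree can be $0$, not strictly positive).

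For the general case of \eqref{eq:arbitrary} your plan is in the right spirit but the mechanism is imprecise. ``Discarding the horizontal components on which $\xi\equiv 0$ and stabilizing'' is not quite the correct operation: those components are stable, so nothing gets contracted, and you still have to show they contribute nonnegatively to $\kappa+\psi$. The paper's cleaner route is to \emph{normalize} along the generic nodes, obtaining finitely many families $\pi_i:\calC_i\to B$ each with smooth generic fiber, and to mark the preimages $\Gamma_\ell$ of the nodes as additional sections. One then checks $\deg_\pi(\kappa+\psi)=\sum_i\deg_{\pi_i}(\kappa+\psi_{\calC_i})$, where on the right the $\psi$-classes of the $\Gamma_\ell$ are included. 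On any $\calC_i$ where $\xi$ is generically nonzero, your \eqref{eq:generic} gives $k\deg_{\pi_i}(\kappa+\psi_{\calC_i})\geq(2g_i-2+n_i)\deg_\pi\eta\geq\deg_\pi\eta$ by stability; on the remaining $\calC_i$ one only needs $\deg_{\pi_i}(\kappa+\psi_{\calC_i})\geq 0$, which is Cornalba's ampleness of $\kappa+\psi$ on $\BM_{g_i,n_i}$. Summing yields \eqref{eq:arbitrary}. Your ``account for the contributions of the contracted components'' would, once made precise, amount to exactly this last nonnegativity.
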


\begin{corollary}
\label{cor:ample}
The divisor class $a (\kappa +  \psi)  - b \eta$ is ample on $\BPP^k(\mu)$ if $a > kb > 0$. 
\end{corollary}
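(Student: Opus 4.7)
The plan is to rewrite the class as a positive linear combination of nef classes, yielding nefness, and then upgrade to ampleness by Kleiman's criterion using the forgetful morphism $f:\overline{\mathcal{P}}^k(\mu)\to\overline{\mathcal{M}}_{g,n}$.

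First I would decompose
\[
 a(\kappa+\psi) - b\eta \;=\; (a-kb)\,(\kappa+\psi) \;+\; b\,\bigl(k(\kappa+\psi) - \eta\bigr),
\]
where both coefficients are strictly positive by the hypothesis $a>kb>0$. The second summand is nef on $\overline{\mathcal{P}}^k(\mu)$ by \eqref{eq:arbitrary} of Theorem~\ref{thm:nef}. The first summand is the pullback along $f$ of the Arbarello-Cornalba class $\kappa^{\mathrm{AC}}=\kappa+\psi$, which is nef on $\overline{\mathcal{M}}_{g,n}$ by \cite{ArbarelloCornalba}. Hence $a(\kappa+\psi) - b\eta$ is nef.

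To upgrade to ampleness I would apply Kleiman's criterion and show that $\deg_B[a(\kappa+\psi)-b\eta]>0$ for every irreducible complete curve $B\subset \overline{\mathcal{P}}^k(\mu)$. Split into cases by whether $f(B)$ is a point or a curve. If $f(B)$ is a point, then $B$ lies in a fiber of $f$, which is a closed subvariety of the projectivization of the corresponding fiber of the twisted Hodge bundle $\overline{\mathcal{H}}^k(\mu^-)$; on this fiber $-\eta$ restricts to the tautological $\mathcal{O}(1)$ and so is ample, giving $\deg_B(-\eta)>0$, while $\deg_B(\kappa+\psi)=0$, so that $\deg_B[a(\kappa+\psi)-b\eta]=-b\deg_B\eta>0$. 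If $f(B)$ is a curve $\beta\subset\overline{\mathcal{M}}_{g,n}$, both summands are non-negative, and the sum fails to be strictly positive only if $\deg_\beta(\kappa+\psi)=0$ and $\deg_B\eta=0$ simultaneously.

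The main obstacle is ruling out this last pathological configuration. By Arbarello-Cornalba's analysis of the birational contraction morphism defined by $\kappa+\psi$, curves $\beta$ with $\deg_\beta(\kappa+\psi)=0$ are constrained to a specific collection of boundary strata of $\overline{\mathcal{M}}_{g,n}$; combining this with the explicit description of boundary points of $\overline{\mathcal{P}}^k(\mu)$ from \cite{BCGGM1,BCGGM2}, one should show that every lift $B$ of such a $\beta$ must satisfy $\deg_B\eta<0$ strictly, which excludes the pathological case and completes the verification of Kleiman's criterion.
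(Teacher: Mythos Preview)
Your decomposition and the case split according to whether $f$ contracts the curve are exactly what the paper does. However, there are two issues.

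First, a minor one: $\kappa+\psi$ is not merely nef on $\overline{\mathcal M}_{g,n}$, it is \emph{ample} by Cornalba's theorem (see Section~\ref{subsec:known}). Hence if $f(B)=\beta$ is a curve, then $\deg_\beta(\kappa+\psi)>0$ automatically, and your ``pathological configuration'' $\deg_\beta(\kappa+\psi)=0$ never occurs. The boundary analysis you propose at the end is unnecessary.

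Second, and more seriously: what you call ``Kleiman's criterion'' is not. Kleiman's criterion requires strict positivity on the \emph{closed} cone $\overline{\mathrm{NE}}(X)\setminus\{0\}$, not just on actual curves; and a nef divisor that is strictly positive on every curve need not be ample (Mumford's example on a ruled surface is the standard counterexample). So even after you verify $D\cdot B>0$ for every integral curve $B$, you have not yet proved ampleness. This is the genuine gap.

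The paper closes this gap by using Seshadri's criterion instead: since $\kappa+\psi$ is ample on $\overline{\mathcal M}_{g,n}$, there is a constant $\alpha>0$ with $(\kappa+\psi)\cdot C\ge\alpha\,\mathrm{mult}_x C$ for every integral curve $C\subset\overline{\mathcal M}_{g,n}$ and every $x\in C$. In the non-contracted case one pushes this bound up through the degree of $B\to\beta$, and in the contracted case one uses that $-\eta$ restricts to $\mathcal O(1)$ on the fiber to get $(-\eta)\cdot B\ge\mathrm{mult}_{x'}B$. This yields a uniform Seshadri constant and hence ampleness. Alternatively, you could rescue your argument by writing $a(\kappa+\psi)-b\eta=\epsilon\bigl(N(\kappa+\psi)-\eta\bigr)+\text{(nef)}$ for small $\epsilon>0$ and $N\gg0$, using that $-\eta$ is $f$-ample so that $N(\kappa+\psi)-\eta$ is genuinely ample for large $N$; then ample plus nef is ample.
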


We will prove Theorem~\ref{thm:nef} and Corollary~\ref{cor:ample} in Section~\ref{sec:nef}. Moreover in Section~\ref{sec:example} we will construct special families of pointed stable differentials to show that our results are optimal in general (see e.g., Corollary~\ref{cor:iff}). Note that since $\eta$ is the $\OO(-1)$ class of the Hodge bundle, linear combinations of $\eta$ with $\kappa$ and $\psi$ span important sections in the Neron-Severi group of the strata. In particular, the divisor classes involving $\eta$ in Theorem~\ref{thm:nef} are not pullbacks from $\BM_{g,n}$, hence the results cannot be deduced directly by using positivity of divisor classes on $\BM_{g,n}$. 

Our results and methods have at least three potential applications. First, the incidence variety compactification $\BPP^k(\mu)$ is a shadow of a more delicate modular compactification parameterizing twisted differentials which are not identically zero on any irreducible component of the underlying curves \cite{BCGGM3}. Since the construction in \cite{BCGGM3} is complex analytic, one cannot directly conclude about the projectivity of the resulting moduli space. Nevertheless, it is promising that one can apply the idea in this paper to exhibit an ample divisor class associated to families of twisted differentials, hence to prove the projectivity of the moduli space of twisted differentials. In addition, knowing ample divisor classes can help determine the birational type of the underlying variety, e.g., the canonical class of a variety of general type can be written as the sum of an ample divisor class and an effective divisor class. We hope to compare the canonical class of the strata with the ample divisor classes produced in this paper in order to determine the birational type of the strata (see \cite{Gendron, Barros, Affine} for some partial results). Finally, the second part of Theorem~\ref{thm:nef} yields an approach to analyze the base loci of divisor classes outside of the ample cone of the strata, hence it can help us better understand the effective cone decomposition of the strata as well as alternate birational models in the sense of Mori's program. We plan to treat these questions in future work. 

\subsection*{Acknowledgements} The author thanks Maksym Fedorchuk and Quentin Gendron for communications on relevant topics. 

%%%%%%%%%%%%%%%%%%%
\section{Preliminaries}
\label{sec:prelim}
%%%%%%%%%%%%%%%%%%%

In this section we collect some preliminary results related to our study. 

%%%%%%%%%%%%%%%%%%%
\subsection{Positivity of divisor classes on $\BM_{g,n}$}
\label{subsec:known}
%%%%%%%%%%%%%%%%%%%

We recall some known results about positivity of divisor classes on the moduli space $\BM_{g,n}$ of pointed stable curves (see \cite[Chapter XIV]{ACG} for a detailed survey). 

First consider the case of curves without marked points. Let $\pi: \calC \to B$ be a family of (semi)stable curves of genus $g$ over an integral and complete curve. Suppose that the generic fiber of $\pi$ is smooth. Cornalba-Harris \cite{CornalbaHarris} and Xiao \cite{Xiao} proved independently the following slope inequality  
$$ \left(8 + \frac{4}{g}\right) \deg_{\pi} \lambda - \deg_{\pi} \delta \geq 0. $$
Under the same assumption Moriwaki improved this inequality by attributing different weights to singular fibers of different kinds \cite{Moriwaki}. Moreover, in the same paper Cornalba-Harris showed that $a\lambda - b\delta$ is ample on $\BM_g$ if and only if $a > 11b > 0$. 

In general for $\BM_{g,n}$, $\lambda$ and $\psi_i$ are known to be nef, dating back to Arakelov and Mumford \cite{MumfordStability, MumfordEnumerative}. Moreover, Cornalba showed that $12\lambda - \delta + \psi = \kappa + \psi $ is ample on $\BM_{g,n}$ \cite{Cornalba}. Fedorchuk further generalized such positivity results to the moduli space of weighted pointed stable curves \cite{Fedorchuk}.

%%%%%%%%%%%%%%%%%%%
\subsection{Structure of the $k$th Hodge bundle}
\label{subsec:hodge}
%%%%%%%%%%%%%%%%%%%

 We analyze the structure of the (twisted) $k$th Hodge bundle over a special kind of nodal curves. The result will be used in Section~\ref{sec:example} when we construct special families of pointed stable differentials. 
 
 Let $C$ be the union of pointed smooth curves $(C_1, q_1)$ and $(C_2, q_2)$ with $q_1$ and $q_2$ identified as a node $q$, where $C_i$ has genus $g_i$ and $g_1 + g_2 = g$. Denote by $\omega_C$ the dualizing line bundle of $C$. Let $D_i$ be an effective divisor in $C_i$ whose support does not contain $q_i$. Consider the space $H^0(C, \omega_C^{\otimes k} (D_1 + D_2))$ whose elements $\xi = (\xi_1, \xi_2)$ consist of (possibly meromorphic) $k$-differentials $\xi_i$ in $H^0(C_i, K_{C_i}^{\otimes k}(kq_i + D_i))$ such that the $k$-residue condition 
 $\Res^k_{q_1} \xi_1 = (-1)^k \Res^k_{q_2} \xi_2$ holds (see \cite[Proposition 3.1]{BCGGM2} about $k$-residue). Note that if $q_1$ is not a $k$th order pole of $\xi_1$, then $\Res^k_{q_1} \xi_1 = 0$, hence $\Res^k_{q_2} \xi_2 = 0$, which is satisfied if $q_2$ is not a $k$th order pole of $\xi_2$. In particular, 
the subspace $H^0(C_1, K_{C_1}^{\otimes k})\oplus (0) \subset H^0(C, \omega_C^{\otimes k} (D_1 + D_2))$ is independent of the position of the node $q$ and the moduli of $C_2$. We remark that the divisor $D_1 + D_2$ corresponds to twisting by the negative part of $\mu$ in the setting of the $k$th Hodge bundle. 

%%%%%%%%%%%%%%%%%%%
\section{Positivity of divisor classes on $\BPP^k(\mu)$}
\label{sec:nef}
%%%%%%%%%%%%%%%%%%%

In this section we prove Theorem~\ref{thm:nef} and Corollary~\ref{cor:ample}. 

\begin{proof}[Proof of Theorem~\ref{thm:nef}]
We first prove \eqref{eq:psi} and \eqref{eq:generic}. Let $\pi: \calC\to B$ be a one-parameter family of pointed stable $k$-differentials in $\BPP^k(\mu)$ whose generic fiber is smooth, with distinct sections $S_1, \ldots, S_n$ as the zeros and poles of the parameterized $k$-differentials. After base change and replacing $\calC$ by its minimal desingularization if necessary, we can assume that $\calC$ is smooth and each fiber is a (semi)stable pointed curve (see e.g., \cite[p. 308]{HarrisMorrison}). Denote by $\omega$ the relative dualizing line bundle class of $\pi$. If a special fiber of the family is reducible, then the $k$-differential it carries can be identically zero along some irreducible components of the fiber. Denote by $V\subset \calC$ the union of these irreducible components (with suitable multiplicities) on which the $k$-differentials are identically zero. Since $\eta$ represents the line bundle $\OO(-1)$ of generating $k$-differentials of the family, comparing $\pi^{*}\eta$ with $\omega^{\otimes k}$, the former has zeros (or poles) along the sections $S_i$ with multiplicity $m_i$ and has zeros along the support of $V$. In other words, the following relation of divisor classes holds on $\calC$: 
\begin{eqnarray}
\label{eq:tauto-1}
 \pi^{*} \eta = k\omega - \sum_{i=1}^n m_i S_i - V. 
\end{eqnarray}
Intersecting both sides of \eqref{eq:tauto-1} with $S_i$ and applying $\pi_*$, we obtain that 
$$ \deg_\pi \eta = (m_i + k) \deg_\pi \psi_i - S_i \cdot V. $$
Since $S_i$ is a section and $V$ consists of components of special fibers, we have $S_i \cdot V \geq 0$. Hence 
$ (m_i + k) \psi_i - \eta $ has nonnegative degree for the family $\pi$, thus proving \eqref{eq:psi}. 

Now rewrite the above as 
$$ (m_i + k)\deg_\pi \eta = (m_i + k)^2 \deg_\pi  \psi_i - (m_i+k) (S_i \cdot V). $$
Summing over $i=1, \ldots, n$ and using $\sum_{i=1}^n m_i = k (2g-2)$, we have 
$$ k(2g-2 + n)\deg_\pi  \eta = \sum_{i=1}^n (m_i + k)^2 \deg_\pi \psi_i - \sum_{i=1}^n (m_i+k) (S_i \cdot V). $$
Since $\pi_{*}V = 0$, intersecting \eqref{eq:tauto-1} with $V$ and applying $\pi_{*}$, we obtain that 
$$ 0 = k (\omega \cdot V) - \sum_{i=1}^n m_i (S_i \cdot V) - V^2. $$
Moreover, taking the square of \eqref{eq:tauto-1} and applying $\pi_{*}$, we obtain that 
$$ 0 = k^2 \deg_\pi \kappa - \sum_{i=1}^n (m_i^2 + 2km_i)\deg_\pi \psi_i + V^2 - 2k (\omega \cdot V) + 2 \sum_{i=1}^n m_i (S_i \cdot V). $$
Using these relations, we conclude that  
\begin{eqnarray*}
 k^2\deg_\pi (\kappa + \psi)- k(2g-2+n)\deg_\pi \eta & = & k^2 \deg_\pi\kappa - \sum_{i=1}^n (m_i^2 + 2km_i)\deg_\pi \psi_i \\
  &\  & +\sum_{i=1}^n(m_i+k)(S_i \cdot V) \\
 & = &  2k(\omega \cdot V) - V^2 - \sum_{i=1}^n (m_i - k)(S_i \cdot V) \\ 
 & = & k(\omega + \sum_{i=1}^n S_i)\cdot V \geq 0,
\end{eqnarray*}
since $\omega + \sum_{i=1}^n S_i$ is nef on $\calC$ (see e.g., \cite{Keel}). Therefore, $k(\kappa+  \psi) - (2g-2+n)\eta $ has nonnegative degree for the family $\pi$, thus proving \eqref{eq:generic}. 

Next we prove \eqref{eq:arbitrary}. Consider an arbitrary one-parameter family $\pi: \calC \to B$ of pointed stable $k$-differentials in $\BPP^k(\mu)$. Suppose that the normalization of $\pi$ consists of irreducible families $\pi_i: \calC_i \to B$ with generic smooth fibers of genus $g_i$. Denote by $\Gamma_\ell$ the sections in the normalization arising from the preimages of the generic nodes in $\calC$. Each $\Gamma_\ell$ is paired uniquely with another $\Gamma_{\ell'}$ to form the corresponding generic node. Denote by $\psi_\ell$ the cotangent line bundle class associated to $\Gamma_\ell \subset \calC_i$, so that $\deg_{\pi_i} \psi_\ell = - \Gamma_\ell^2$. 

If $\deg_{\pi}\eta \leq 0$, then \eqref{eq:arbitrary} holds, since $\kappa +\psi$ is ample on $\BM_{g,n}$. Now we assume that $\deg_{\pi}\eta > 0$. Note that 
$$ \deg_{\pi} \lambda = \sum_i \deg_{\pi_i} \lambda \quad \mbox{and} \quad \deg_{\pi}\delta = \sum_i \deg_{\pi_i}\delta + \sum_{\ell} \Gamma^2_{\ell}. $$
It follows that 
\begin{eqnarray*}
\deg_{\pi}(\kappa +  \psi) & = & \sum_i (12 \deg_{\pi_i} \lambda - \deg_{\pi_i}\delta + \sum_{S_j \subset \calC_i} \deg_{\pi_i}\psi_j - \sum_{\Gamma_{\ell}\subset \calC_i}  \Gamma^2_{\ell}) \\
& = & \sum_i (12 \deg_{\pi_i} \lambda - \deg_{\pi_i}\delta + \sum_{S_j \subset \calC_i} \deg_{\pi_i}\psi_j + \sum_{\Gamma_{\ell}\subset \calC_i} \deg_{\pi_i}\psi_\ell). 
\end{eqnarray*}
By the definition of $\BPP^k(\mu)$, there exists some $i$ such that the parameterized $k$-differential is not identically zero on the generic fiber of $\pi_i$. Then by \eqref{eq:generic} we have 
\begin{eqnarray*}
k(12 \deg_{\pi_i} \lambda - \deg_{\pi_i}\delta + \sum_{S_j \subset \calC_i} \deg_{\pi_i}\psi_j + \sum_{\Gamma_{\ell}\subset \calC_i} \deg_{\pi_i}\psi_{\ell}) & \geq & (2g_i - 2 + n_i) \deg_{\pi} \eta \\
&\geq &  \deg_{\pi}\eta  
\end{eqnarray*}
where $n_i$ is the number of sections $S_j$ and $\Gamma_{\ell}$ lying in $\calC_i$ and the inequality $2g_i - 2 + n_i \geq 1$ follows from stability. Moreover for all $i$, we have 
$$12 \deg_{\pi_i} \lambda - \deg_{\pi_i}\delta + \sum_{S_j \subset \calC_i} \deg_{\pi_i}\psi_j + \sum_{\Gamma_{\ell}\subset \calC_i} \deg_{\pi_i}\psi_{\ell} \geq 0, $$
as $12\lambda - \delta + \sum \psi_j + \sum \psi_\ell $ is ample on the corresponding moduli space of pointed stable curves when we treat both $S_j$ and $\Gamma_\ell$ as sections of marked points. Combining the above analysis, \eqref{eq:arbitrary} thus follows. 
\end{proof}

Now we prove Corollary~\ref{cor:ample}. 

\begin{proof}[Proof of Corollary~\ref{cor:ample}]
%Comining the above examples, we conclude that if the divisor class 
%$a (\kappa + \sum_{i=1}^n\psi_i) - b \eta$ is ample on $\BPP^k(\mu)$, then $a$ and $b$ must satisfy that $a > kb > 0$. 
It suffices to show that the divisor class 
$D = (k + c) (\kappa + \psi) - \eta$ with $c > 0$ is ample. 
Let $f: \BPP^k(\mu) \to \BM_{g,n}$ be the morphism forgetting the differentials. Since $\kappa + \psi$ is ample on $\BM_{g,n}$, by Seshadri's criterion (see e.g., \cite[(6.32)]{HarrisMorrison}) there exists $\alpha > 0$ such that 
$$ (\kappa + \psi)\cdot C \geq \alpha  \mult_{x} C $$
for every integral curve $C\subset \BM_{g,n}$ and every $x\in C$. Suppose that $C'$ is an integral curve in $\BPP^k(\mu)$ and $x'\in C'$. If $C'$ is not contracted by $f$, then the image $C$ of $C'$ under $f$ remains to be an integral curve. Suppose that $C'$ has degree $d$ over $C$ and let $x = f(x')$. It follows that $ d \mult_{x} C\geq \mult_{x'}C' $. Since 
$k (\kappa + \psi) -  \eta$ is nef on $\BPP^k(\mu)$ by Theorem~\ref{thm:nef}, we have 
$$D\cdot C' \geq c (\kappa + \psi)\cdot (dC) \geq c \alpha (d \mult_{x} C)\geq c\alpha \mult_{x'}C'. $$
If $C'$ is contracted by $f$, then $C'$ is contained in a fiber of the (twisted) $k$th Hodge bundle over $\BM_{g,n}$. Since $\eta$ restricted to each fiber of the Hodge bundle is the $\OO(-1)$ class, it implies that 
$$ D\cdot C' = (-\eta) \cdot C' \geq \mult_{x'} C'. $$ 
Combining the two cases above, by Seshadri's criterion we thus conclude that $D$ is ample on $\BPP^k(\mu)$. 
\end{proof}

%%%%%%%%%%%%%%%%%%
\section{Special families of differentials}
\label{sec:example}
%%%%%%%%%%%%%%%%%%

We first explain that equality can be attained for \eqref{eq:psi} and \eqref{eq:generic} in Theorem~\ref{thm:nef}. In the proof we see that if 
$V = 0$, then the divisor classes $(m_i + k) \psi_i - \eta$ and $k (\kappa + \psi) - (2g-2+ n) \eta$ have degree zero on $B$. This is the case, for instance, when $B$ corresponds to a Teichm\"uller curve in the strata of abelian or quadratic differentials, as degenerate differentials parameterized in the boundary of Teichm\"uller curves do not possess any identically zero component (see \cite{ChenMoellerAbelian, ChenMoellerQuadratic}).  

Next we construct special families of pointed stable differentials that have nonpositive intersection with divisor classes outside of the ample range in Corollary~\ref{cor:ample}. 

\begin{example}
\label{ex:1}
Take $(C, p_1, \ldots, p_n, q)$ in the stratum $\PP^k(m_1, \ldots, m_n, -2k)$ of genus $g-1$ with $q$ as the last pole such that the $k$-residue at $q$ is zero. According to \cite{GendronTahar}, such $(C, p_1, \ldots, p_n, q)$ exists for $g\geq 2$ and any $k, \mu$ except 
$g = k = 2$ and $\mu = (3,1)$ (but the stratum $\PP^2(3,1)$ is empty). 
Glue $C$ to a pencil $B$ of plane cubics $E$ by identifying $q$ with a base point of the pencil. For the resulting pointed stable curve $C$ union $E$, let $\xi$ be a stable $k$-differential such that $\xi|_C \equiv 0$ and $\xi|_E$ is the unique nontrivial holomorphic $k$-differential on $E$ (up to scale). This yields a family $\pi$ of pointed stable $k$-differentials contained in $\BPP^k(\mu)$ by \cite{BCGGM2}. We have $\deg_{\pi} \psi_i = 0$, $\deg_{\pi} \kappa = 1$ and $\deg_{\pi} \eta = k$,
where the last identity follows from the fact that $\eta = k\lambda$ over $\BM_{1,1}$ since the $k$th Hodge bundle on 
$\BM_{1,1}$ is a line bundle with Chern class $k\lambda$. It implies that for $g\geq 2$ and any $k, \mu$ 
the divisor class 
$a (\kappa + \psi) - b \eta$ is not ample when $a \leq k b$. 
%{\bf Here we need $g\geq 1$. Moreover, make sure that such zero-residue differential exists.}
\end{example}

\begin{example}
\label{ex:2}
Take $(C, p_1, \ldots, p_n, q_1, q_2)$ in the stratum $\PP^k(m_1, \ldots, m_n, -2k, -2k) $ of genus $g-2$, 
where $q_1$ and $q_2$ correspond to the last two poles of order $2k$ such that the $k$-residue at $q_i$ is zero. According to \cite{GendronTahar}, such $(C, p_1, \ldots, p_n, q_1, q_2)$ exists for $g \geq 3$ and any $k, \mu$ except $g=3$, $k= 2$ and
$\mu =  (5, 3)$. 
Attach $C$ to two elliptic tails $E_1$ and $E_2$ at $q_1$ and $q_2$ respectively to form a pointed stable curve. Let $\xi_i$ be a nontrivial holomorphic $k$-differential on $E_i$. Let $B \cong\bbP^1$ be the family parameterizing pointed stable differentials  
$\xi_{[t_1, t_2]} = (E_1, t_1 \xi_1; C, 0; E_2, t_2 \xi_2)$, where $[t_1, t_2]$ are the homogeneous coordinates of $\bbP^1$. By \cite{BCGGM2} the family $\pi$ of differentials $\xi_{[t_1, t_2]}$ over $B$
is contained in $\BPP^k(\mu)$. Since the underlying pointed stable curve is fixed, we have $\deg_{\pi} \kappa = \deg_{\pi} \psi_i = 0$. 
Since $B$ corresponds to a line in a fiber of the $k$th Hodge bundle and $\eta$ is the $\OO(-1)$ class, we have $\deg_{\pi} \eta = -1$. 

For the exceptional case $g=3$, $k=2$ and $\mu =  (5, 3)$, take $(\bbP^1, p_1, p_2, q_1, q_2, q_3)$ in the stratum $\PP^2(5,3,-4, -4, -4)$ such that the $2$-residues at the three poles $q_1, q_2, q_3$ are of the form $r, r, 0$ for some $r\neq 0$. By \cite{GendronTahar} such $(\bbP^1, p_1, p_2, q_1, q_2, q_3)$ exists. Glue an elliptic bridge $E_1$ to $\bbP^1$ at $q_1, q_2$ and glue an elliptic tail $E_2$ to $\bbP^1$ at $q_3$. Let $\xi_i$ be a nontrivial holomorphic quadratic differential on $E_i$. Let $B \cong\bbP^1$ be the family parameterizing pointed stable differentials  $\xi_{[t_1, t_2]} = (E_1, t_1 \xi_1; \bbP^1, 0; E_2, t_2 \xi_2)$. By \cite{BCGGM2} the family $\pi$ of differentials $\xi_{[t_1, t_2]}$ over $B$
is contained in $\BPP^2(5,3)$. By the same reason we have $\deg_{\pi} \kappa = \deg_{\pi} \psi_i = 0$ and $\deg_{\pi} \eta = -1$. 

Combining the above cases, it follows that for $g\geq 3$ and any $k, \mu$ the divisor class 
$a (\kappa + \psi) - b \eta$ is not ample when $b \leq 0$. 
\end{example}

\begin{example}
For $g\geq 2$, fix a differential $(C, \xi)$ in the stratum 
$\HH^k(k(2g-4))$ of genus $g-1$. Take a nontrivial one-parameter family $B$ of pointed genus one curves in the stratum $\BPP^k(m_1, \ldots, m_n, -k(2g-2))$ with zero $k$-residue at the last pole. According to \cite{GendronTahar}, such one-parameter family of differentials exists for $g\geq 2$ and any $k, \mu$ except $ g = k = 2$ and $\mu = (3,1)$ (but the stratum $\PP^2(3,1)$ is empty). Identify the section of poles of order $-k(2g-2)$ in the family with the zero of order $k(2g - 4)$ in $C$. This way we obtain a family $\pi: \calC \to B$ with differentials given by $\xi$ on $C$ and identically zero on the genus one components. By \cite{BCGGM2} this family is contained in $\BPP^k(\mu)$. By the analysis in Section~\ref{subsec:hodge}, the differentials parameterized by $B$ are contained in the subspace $H^0(K_C^{\otimes k})\oplus (0)$ which is independent of the genus one components. Since $\xi$ is fixed on $C$, we conclude that 
$\deg_{\pi} \eta = 0$. It implies that for $g\geq 2$ and any $k, \mu$ the divisor class $a (\kappa + \psi) - b \eta$ is not ample when $a \leq 0$. 
\end{example}

Combining these examples with Corollary~\ref{cor:ample}, we thus conclude the following. 

\begin{corollary}
\label{cor:iff}
For $g\geq 3$ and any $k, \mu$, the divisor class $a (\kappa +  \psi)  - b \eta$ is ample on $\BPP^k(\mu)$ if and only if $a > kb > 0$. 
\end{corollary}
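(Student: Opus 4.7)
The sufficient direction $a > kb > 0 \Rightarrow$ ample is precisely Corollary~\ref{cor:ample}, so only the converse remains. The plan is to test the divisor class $D = a(\kappa+\psi) - b\eta$ against the integral complete curves in $\BPP^k(\mu)$ furnished by Examples~\ref{ex:1} and~\ref{ex:2}, both of which are available for every $g \geq 3$ and every $k, \mu$. If $D$ is ample, then $D \cdot B > 0$ on each such test family $\pi \colon \calC \to B$, so each resulting numerical inequality must hold.

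First, intersect $D$ with the family from Example~\ref{ex:1}, for which $\deg_\pi(\kappa+\psi) = 1$ and $\deg_\pi \eta = k$; positivity forces $a - kb > 0$, equivalently $a > kb$. Second, intersect $D$ with the family from Example~\ref{ex:2}, which is obtained by varying a pencil of stable $k$-differentials inside a single fiber of the $k$th Hodge bundle over a fixed underlying stable curve. This family satisfies $\deg_\pi(\kappa+\psi) = 0$ and $\deg_\pi \eta = -1$, so positivity forces $b > 0$. Combining the two inequalities yields $a > kb > 0$, giving the reverse implication.

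Essentially all of the real work is packaged inside the existence of the two test families: the ``only if'' direction is then a one-line consequence of positive intersection. The main obstacle, therefore, is not in the present argument but in Examples~\ref{ex:1}--\ref{ex:2} themselves, which rely on Gendron--Tahar's existence theorem for $k$-differentials with prescribed vanishing $k$-residues in order to produce smoothable boundary configurations in every stratum. The hypothesis $g \geq 3$ enters only through Example~\ref{ex:2}, where one must attach two elliptic tails (or an elliptic bridge together with a tail, in the exceptional signature $(5,3)$ with $k=2$) to a lower genus differential without leaving the stratum; for $g = 2$ no such pencil inside a Hodge fiber is available, which is precisely why the necessary condition $b > 0$ cannot be extracted by this method in that case.
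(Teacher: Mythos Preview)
Your proof is correct and follows essentially the same approach as the paper: the sufficient direction is Corollary~\ref{cor:ample}, and the necessary direction is obtained by intersecting with the test families of Examples~\ref{ex:1} and~\ref{ex:2}, yielding $a>kb$ and $b>0$ respectively. Your commentary on where the hypothesis $g\geq 3$ enters is also accurate.
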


\begin{remark}
The ``only if'' statement in Corollary~\ref{cor:iff} does not hold for $g\leq 2$ in general. For example, consider $g=2$, $k=1$ and $\mu = (2)$. One can check that $\BPP^1(2)$ is isomorphic to the closure of the stratum in $\BM_{2,1}$, which is the divisor parameterizing genus two curves with a marked Weierstrass point. Since $\kappa +  \psi$ is ample 
on $\BM_{2,1}$, it is also ample on the subvariety $\BPP^1(2)$, hence in this case $b > 0$ is not necessary for the divisor class $a (\kappa +  \psi)  - b \eta$ to be ample. Similarly consider $g=1$ and $\mu = (0)$, i.e., the stratum of holomorphic $k$-differentials on elliptic curves. Note that $\BPP^k(0)$ is isomorphic to $\BM_{1,1}$, on which 
$\kappa = 0$, $\psi = \lambda$ and $\eta = k\lambda$. It follows that $a (\kappa + \psi) - b \eta = (a-kb) \lambda$ is ample if and only if $a > kb$ (but $a$ and $b$ do not have to be positive). For $g=0$, consider $\mu = (m_1, m_2, m_3)$ with 
$m_1 + m_2 + m_3 = -2k$. Then the stratum $\BPP^k(m_1, m_2, m_3)$ consists of a single point, hence $a (\kappa +  \psi)  - b \eta$ is ample for any $a$ and $b$. 
\end{remark}

%%%%%%%%%%%%%%%%%%%

\end{document}